\newtheorem{theorem}{Theorem}
\newtheorem{lemma}[theorem]{Lemma}
\newtheorem*{openproblem}{Open Problem}
\theoremstyle{definition} 
\newtheorem{definition}[theorem]{Definition}
\newtheorem{corollary}[theorem]{Corollary}
\newtheorem{remark}{Remark}
\def\An{\{A_n^i\}_{n=0}^{\infty}}
\def\xn{\{x_n\}_{n=-t}^{\infty}}
\def\Zed{\mathbf{Z}}
\def\ZedP{\mathbf{Z}^+}
\def\Pos{=1,2,\ldots}
\def\Nats{\mathbf{N}}
\def\aftermath{\par\vspace{-\belowdisplayskip}\vspace{-\parskip}\vspace{-\baselineskip}}
\begin{document}

\title{On the Boundedness of Positive Solutions of the Reciprocal Max-Type Difference Equation \\
$\displaystyle{x_{n}=\max\left\{\frac{A^{1}_{n-1}}{x_{n-1}}, \frac{A^{2}_{n-1}}{x_{n-2}}, \ldots, \frac{A^{t}_{n-1}}{x_{n-t}}\right\}}$ \\ 
with Periodic Parameters} 

\author{
Daniel W. Cranston
\thanks{Department of Mathematics and Applied Mathematics, Virginia Commonwealth University, \texttt{dcranston@vcu.edu}}
\and
Candace M. Kent
\thanks{Department of Mathematics and Applied Mathematics, Virginia Commonwealth University, \texttt{cmkent@vcu.edu}}
}

\date{}

\maketitle


\begin{abstract}
We investigate the boundedness of positive solutions of the reciprocal max-type difference equation

\[ x_{n}=\max\left\{\frac{A_{n-1}^{1}}{x_{n-1}}, \frac{A_{n-1}^{2}}{x_{n-2}},
 \ldots, \frac{A_{n-1}^{t}}{x_{n-t}}\right\}, \ \ n=1, 2, \ldots, \]

\noindent where, for each value of $i$, the sequence
$\{A_{n}^{i}\}_{n=0}^{\infty}$ of positive numbers is periodic with period
$p_{i}$.  We give both sufficient conditions on the $p_{i}$'s for the
boundedness of all solutions and sufficient conditions for all solutions to be
unbounded.  This work essentially complements the work by Biddell and Franke, 
who showed that as long as every positive solution of our equation is
\emph{bounded}, then every positive solution is eventually periodic, thereby
leaving open the question as to when solutions are bounded. 

\vspace{.1in} 

\noindent \textbf{Keywords:}  reciprocal max-type equation; unbounded solutions; eventually periodic solutions; neuroscience; morphogenesis

\end{abstract}



\section{Introduction}
\label{section:Introduction}

\noindent Difference equations with the maximum function and reciprocal arguments, or reciprocal max-type difference equations, were conceived 
circa 1994, when G. Ladas at the University of Rhode Island in a seminar on difference equations and applications took a variant of the Lozi map,

\[ y_{n}=|y_{n-1}|-y_{n-2}, \ \ n\Pos ,\]

\noindent made a change of variables, $y_{n}=\ln x_{n}$, and produced the following equation:

\[ x_{n}=\frac{\max\{x_{n-1}^{2}, 1\}}{x_{n-1}x_{n-2}}, \ \ n\Pos\,. \]

\noindent This equation was extended to the equation

\begin{equation}
x_{n}=\frac{\max\{x^{k}_{n-1}, A\}}{x^{\ell}_{n-1}x_{n-2}}, \ \ n\Pos ,
\label{equation:one}
\end{equation}

\noindent where $k, \ell\in\mathbf{Z}$, $A\in (0, \infty)$, and initial
conditions are positive (cf.~\cite{LADAS}).  Investigation
of (\ref{equation:one}) with $A=1$ led to interesting results 
about the periodicity of solutions, e.g., every solution is periodic with period 9
if $k=2$ and $\ell =1$.

\vspace{.1in}

\noindent From 1994 to the present, difference equations with the maximum (or minimum) function
and reciprocal arguments have rapidly evolved into a diverse family of equations.  
In 1998, Al-Amleh, Hoag, and Ladas~\cite{AHL} investigated one of the earliest autonomous reciprocal max-type equations,

\begin{equation}
x_{n}=\max\left\{\frac{a}{x_{n-1}}, \frac{A}{x_{n-2}}\right\}, \ \ n\Pos,
\label{equation:two}
\end{equation}

\noindent where $a, A\in\mathbf{R}-\{0\}$.  One major result they
obtained was that when $a=1$, $A\in (0, \infty)$, and initial conditions are
positive, every solution is periodic with

\begin{enumerate}

\item period 2 if $A\in (0, 1)$;

\item period 3 if $A=1$;

\item period 4 if $A\in (1, \infty)$.

\end{enumerate}

\noindent However, they also showed that every solution is \emph{unbounded} when $a\neq A$, 
$a, A\in (-\infty, 0)$, and $x_{-1}, x_{0}\in\mathbf{R}-\{0\}$.

\vspace{.1in}

\noindent Replacing the constant coefficient $A$ by the variable coefficient $A_{n}$ (and replacing $a$ by $1$) in 
(\ref{equation:two}), in 1997 Briden et al.~\cite{BGLM} studied the resulting nonautonomous equation

\begin{equation}
x_{n}=\max\left\{\frac{1}{x_{n-1}}, \frac{A_{n-1}}{x_{n-2}}\right\}, \ \ n\Pos, 
\label{equation:three}
\end{equation} 

\noindent where $\{A_{n}\}_{n=0}^{\infty}$ is a periodic sequence of positive
numbers with period 2  
%
%
and initial conditions are positive.  They showed that every positive solution is eventually periodic with

\begin{enumerate}

\item period 2 if $A_{0}A_{1}\in (0, 1)$;

\item period 6 if $A_{0}A_{1}=1$;

\item period 4 if $A_{0}A_{1}\in (1, \infty)$.

\end{enumerate}

\noindent At this point, no unbounded solutions were known.
%
%
A few years later,
Briden et al.~\cite{BGKL} and Grove
et al.~\cite{GKLR} changed the period of 
$\{A_{n}\}_{n=0}^{\infty}$
in (\ref{equation:three}) to period 3, and unbounded solutions made their
first appearance with a positive periodic parameter and positive initial
conditions in a reciprocal max-type equation.  Specifically, letting
$\{A_{n}\}_{n=0}^{\infty}$ be positive and periodic with period 3,
%
%
they 
showed that every solution is

\begin{enumerate}

\item 
eventually periodic with period 2,
if $A_{n}\in (0, 1)$ for all $n\geq 0$; 

\item 
eventually periodic with period 12,
if $A_{n}\in (1, \infty)$ for all $n\geq 0$; 

\item 
\emph{unbounded},
if $A_{i+1}<1<A_{i}$ for some $i\in\{0, 1, 2\}$; 

\item 
eventually periodic with period 3\ in all other cases. 

\end{enumerate}

\vspace{.1in}

\noindent Upon the discovery that unbounded solutions could indeed occur with
the reciprocal max-type equation, Kent and Radin~\cite{KENTR} in 2003 sought
necessary and sufficient conditions for boundedness with the equation 

\begin{equation}
x_{n}=\max\left\{\frac{A_{n-1}}{x_{n-1}}, \frac{B_{n-1}}{x_{n-2}}\right\}, \ \
n\Pos, 
\label{equation:four}
\end{equation}

\noindent where $\{A_{n}\}_{n=0}^{\infty}$ and $\{B_{n}\}_{n=0}^{\infty}$ are
positive periodic sequences with minimal periods $p$ and $q$, respectively,
and initial conditions are positive.  
They showed that every positive solution is:

\begin{enumerate}

\item 
bounded, if neither $p$ nor $q$ is a multiple of 3; 

\item 
unbounded,
if $p=3k$ for some $k\in\ZedP$, such that for some $i\in\{1, 2, 3\}$ and for all $j=1, 2, \ldots, k$,

\[ A_{i+3j}<B_{1}, B_{2}, \ldots, B_{q}<A_{i+3j+1}; \]


\item 
unbounded,
if $q=3k$ for some $k\in\ZedP$, such that for some $i\in\{1, 2, 3\}$ and for all
$j=1, 2, \ldots, k$,

\[ B_{i+3j}<A_{1}, A_{2}, \ldots, A_{q}<B_{i+3j+1}. \]


\end{enumerate}

\vspace{.1in}

\noindent In contrast, Kerbert and Radin~\cite{KERBERTR} showed in 2008 
that every positive solution of the equation

\begin{equation}
x_{n}=\max\left\{\frac{A_{n-1}}{x_{n-1}}, \frac{B_{n-1}}{x_{n-3}}\right\}, \ \
n\Pos, 
\label{equation:five}
\end{equation}

\noindent is unbounded if $\{A_{n}\}_{n=0}^{\infty}$ and
$\{B_{n}\}_{n=0}^{\infty}$ are positive periodic sequences with minimal periods
$p$ and $q$, respectively, and $p$ or $q$ is a multiple of 4 (together with
certain other conditions on $A_{n}$ and $B_{n}$ which we omit for the sake of
brevity).  Furthermore, for the equation

\begin{equation}
x_{n}=\max\left\{\frac{A_{n-1}}{x_{n-2}}, \frac{B_{n-1}}{x_{n-3}}\right\}, \ \ n\Pos, 
\label{equation:six}
\end{equation}

\noindent where again $\{A_{n}\}_{n=0}^{\infty}$ and $\{B_{n}\}_{n=0}^{\infty}$
are positive periodic sequences with minimal periods $p$ and $q$, respectively,
the methods in this paper can be used to show that every positive solution
is unbounded if either $p$ or $q$ is a multiple of 5 (together with certain
other conditions which we again omit for brevity).

\vspace{.1in}

\noindent Around the same time as Kerbert and Radin's investigation, Bidwell and Franke in a landmark paper~\cite{BF} considered the following equation:

\begin{equation}
x_{n}=\max\left\{\frac{A_{n-1}^{1}}{x_{n-1}}, \frac{A_{n-1}^{2}}{x_{n-2}},
 \ldots, \frac{A_{n-1}^{t}}{x_{n-t}}\right\}, \ \ n\Pos,
\label{equation:eight}
\end{equation}

\noindent where $t\in\{1, 2, \ldots\}$, $\{A_{n}^{i}\}_{n=0}^{\infty}$, for all
$i=1, \ldots, t$, is a nonnegative periodic sequence with period
$p_{i}\in\ZedP$, and initial conditions are positive.  They showed that if
every solution of (\ref{equation:eight}) is \emph{bounded}, then every solution
is eventually periodic.  The question then remained:  \emph{Under what
conditions on the nonnegative periodic parameters is every solution bounded?} 

\vspace{.1in}

\noindent In Sections 3 and 4, we largely answer this question when the periodic parameters,
$\{A_{n}^{i}\}_{n=1}^{\infty}$ for all $i=1, \ldots, t$, are positive. 
Specifically, we find sufficient conditions on the parameters' periods 
such that every solution is bounded and also sufficient conditions on the
parameters' periods and on the parameters' values, in comparison with each other,
such that every solution is unbounded.  

\vspace{.1in}

\noindent 
The past decade has seen investigation of
difference equations that are extensions and generalizations of
(\ref{equation:eight}), as well as difference equations with maxima that have
been inspired by differential equations with maxima and automatic control
theory (cf.~\cite{BH}).  All such difference equations have added an order of
great complexity.  For a sampling of this work, see
the papers by \c{C}inar, Stevi\'{c}, and Yal\c{c}inkaya~\cite{CSY}; 
Iri\u{c}an and Elsayad~\cite{IE}; 
Liu, Yang, and Stevi\'{c} ~\cite{LYS}; 
Qin, Sun, and Xi~\cite{QSX}; 
Sauer~\cite{ST1} and~\cite{ST2}; 
Stevi\'{c} ~\cite{SS}; 
Sun~\cite{SF};  
Touafek and Halim~\cite{TH}; and
Yang, Liu, and Lin~\cite{YLL}. 



\section{Preliminaries}
\label{section:Preliminaries}
%
For convenience, we write $[k]$ to denote the set $\{1,\ldots,k\}$.
Let $\An$, for each $i\in[t]$, be a periodic sequence of positive real numbers
with prime period $p_i$. 
The following 
preliminaries will be useful in the sequel.

\begin{definition}[Boundedness and Persistence]
A positive sequence $\xn$ is \emph{bounded} if there exists a positive constant
$M$ with
$$
0 < x_n \le M~~\mbox{for all $n=-t,\ldots,0,1,\ldots$}
$$
and it \emph{persists} (or is \emph{persistent}) if there exists a positive
constant $m$ with

$$
m\le x_n~~\mbox{for all $n=-t,\ldots,0,1,\ldots$.}
$$
\end{definition}

\begin{remark}
To show that a positive solution of a difference equation
$$
x_n=f(x_{n-1},x_{n-2},\ldots,x_{n-t}),~~~\mbox{$n=0,1,\ldots,$}
$$
is unbounded and does not persist,
we exhibit a subsequence of the solution which diverges
to infinity and a subsequence which converges to zero.
\end{remark}

\begin{definition}[Eventual Periodicity]
A positive sequence $\xn$ is \emph{eventually periodic} if there exists $N\ge
-t$ such that $\{x_n\}_{n=N}^{\infty}$ is periodic. 
\end{definition}

\begin{definition}[Extended Periodicity]
A positive sequence $\xn$ is \emph{extended periodic with period $p$} if 
for all $i\in[p]$ either 
${\lim_{n\to \infty}x_{pn+i}=\infty}$
or
${\lim_{n\to \infty} x_{pn+i}=0}$.
%
%
\end{definition}

\begin{lemma}
If $\xn$ is a positive solution of (\ref{equation:eight}), 
%
then $\xn$ is bounded if and only if it persists.
\end{lemma}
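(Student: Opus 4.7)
The plan is to use the fact that each periodic parameter sequence $\An$ takes only finitely many (positive) values, so there exist constants $\alpha,\beta > 0$ with $\alpha \le A_n^i \le \beta$ for every $i \in [t]$ and every $n \ge 0$. The entire argument then reduces to a one-step manipulation of the recurrence in each direction, together with the observation that the finitely many initial conditions $x_{-t},\ldots,x_0$ automatically have a positive lower bound and a finite upper bound.

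First I would handle the forward implication. Assume $\xn$ is bounded, say $x_n \le M$ for all $n \ge -t$. Then for every $n \ge 1$,
\[
x_n \;=\; \max_{i\in[t]}\frac{A^i_{n-1}}{x_{n-i}} \;\ge\; \frac{A^1_{n-1}}{x_{n-1}} \;\ge\; \frac{\alpha}{M}.
\]
Setting $m := \min\bigl\{\alpha/M,\, x_{-t},\ldots,x_0\bigr\} > 0$ gives $x_n \ge m$ for all $n\ge -t$, so $\xn$ persists.

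For the converse, assume $\xn$ persists, so $x_n \ge m > 0$ for all $n \ge -t$. Then for every $n\ge 1$,
\[
x_n \;=\; \max_{i\in[t]}\frac{A^i_{n-1}}{x_{n-i}} \;\le\; \max_{i\in[t]}\frac{\beta}{m} \;=\; \frac{\beta}{m},
\]
and setting $M := \max\bigl\{\beta/m,\, x_{-t},\ldots,x_0\bigr\}$ yields $x_n \le M$ for all $n \ge -t$, so $\xn$ is bounded.

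There is no real obstacle here: the only point that needs care is that Definition of boundedness/persistence requires the inequality to hold on the full index range $n \ge -t$, which forces a cosmetic adjustment to include the initial conditions in the constants $m$ and $M$. The positivity of the parameters (ensuring $\alpha > 0$) and their periodicity (ensuring $\beta < \infty$) are precisely what make this bidirectional implication hold; if any $A_n^i$ were allowed to vanish or blow up, the argument would fail in one direction.
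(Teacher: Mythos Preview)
Your proof is correct and follows essentially the same approach as the paper's: both arguments bound $x_n$ above or below by a ratio of the uniform bound on the parameters to the assumed bound on the solution. The paper only writes out the direction ``persistence implies boundedness'' and declares the converse similar, whereas you write out both directions and are slightly more careful in explicitly folding the initial conditions into the constants $m$ and $M$.
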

\begin{proof}
We show that persistence  implies boundedness.  The proof 
of the converse
is similar, so we omit it.

Let $\xn$ be a persistent positive solution of equation (\ref{equation:eight}); since it is
persistent, there exists $\epsilon > 0$ such that $x_i > \epsilon$ for all $i$.
Let $\alpha=\max_jA^j$.
Now we get $x_n= \max_{1\le j\le t}\left\{\frac{A^j}{x_{n-j}}\right\} \le
\frac{\alpha}{\epsilon}$.  Thus, $\xn$ is bounded.
%
%
\end{proof}

In the following definition, the sequences $\An$ need not be periodic.

\begin{definition}[Hypothesis (H)]
A set of $t$ sequences of positive real numbers $\An$, $i\in[t]$,
satisfies Hypothesis (H) if there exists $j\in[t+1]$ such that
for all $i\in[t]$ we have
\vspace{-.2in}

%
%
%

%
%
%
\begin{align*}
S_{A^i}&=\sup \{A^i_{(t+1)n+(t+1)+j}:~n=0,1,\ldots\}\\
&< I_{A^{t+1-i}} = \inf\{A^{t+1-i}_{(t+1)n+(t+1-i)+j}:~n=0,1,\ldots\}.
\end{align*}
\end{definition}

\begin{remark}
If $\An$, $i\in[t]$, is periodic with period $t+1$, then Hypothesis (H)
becomes the following:
%
$$
A^i_{(t+1)n+(t+1)+j}<
A^{t+1-i}_{(t+1)n+(t+1-i)+j} 
$$
for some $j\in[t+1]$ and for all $i\in[t]$.
%
%
\end{remark}

%

\vspace{.2in}

\noindent The number $t+1$ is ``special" and plays a central role in the results
of the sequel.  This number is based on an \emph{ansatz}, derived from various observations.
%
%
It is easy to show that every positive solution of the difference equation

\[ x_{n}=\frac{1}{x_{n-t}}, \ \ n\Pos \]

\noindent where $t\in\ZedP$ and initial conditions are positive, is
periodic with period $2t$.  Now observe the following:

\begin{enumerate}

\item For (\ref{equation:four}), the period of either $\{A_{n}\}_{n=0}^{\infty}$ or 
$\{B_{n}\}_{n=0}^{\infty}$ must be a multiple of 3\ in order for every
solution to be unbounded.  Now 3 is the average of 2 and
4, the respective periods of every positive solution of the equations

\[ x_{n}=\frac{1}{x_{n-1}} \ \ \ \mbox{and} \ \ \ x_{n}=\frac{1}{x_{n-2}}. \]

\noindent The right sides of these two equations make up the arguments of (\ref{equation:four}).

\item Similarly, for (\ref{equation:five}), the period of either
$\{A_{n}\}_{n=0}^{\infty}$ or $\{B_{n}\}_{n=0}^{\infty}$ must be a multiple of
4 for every solution to be unbounded.  Now 4 is the average of 2 and 6, the
respective periods of every positive solution of the equations

\[ x_{n}=\frac{1}{x_{n-1}} \ \ \ \mbox{and} \ \ \ x_{n}=\frac{1}{x_{n-3}}. \]

\noindent The right sides of these two equations make up the arguments of (\ref{equation:five}).

\item Finally, for (\ref{equation:six}), the period of either $\{A_{n}\}_{n=0}^{\infty}$ or 
$\{B_{n}\}_{n=0}^{\infty}$ must be a multiple of 5 for every solution to be
unbounded.  Now 5 is the average of 4 and 6, the respective periods of every
positive solution of the equations

\[ x_{n}=\frac{1}{x_{n-2}} \ \ \ \mbox{and} \ \ \ x_{n}=\frac{1}{x_{n-3}}. \]

\noindent The right sides of these two equations make up the arguments of (\ref{equation:six}).

\end{enumerate}

\vspace{.1in}

\noindent Now we note that the number $t+1$ is the average of the respective periods of every positive solution of the equations

\[ x_{n}=\frac{1}{x_{n-1}}, \ \ x_{n}=\frac{1}{x_{n-2}}, \ \ \ldots, \ \ x_{n}=\frac{1}{x_{n-t}}, \]

\noindent where

\[ 
\frac{\displaystyle{2\sum_{\ell =1}^{t} \ \ell}}{t}=\frac{2t(t+1)}{2t}=t+1. \]

\noindent The right sides of these $t$ equations make up the arguments of
(\ref{equation:eight}).



\section{Sufficient Conditions for Boundedness}
\label{section:Sufficient-Conditions-for-Boundedness}

%
%

\noindent In this section, we find sufficient conditions on the periods $p_{i}$
of the sequences $\{A_{n}^{i}\}_{n=0}^{\infty}$ for all $i\in[t]$ such that
every positive solution of (\ref{equation:eight}) is bounded (and persists).
The bulk of the work falls into Lemmas~\ref{lemma1} and \ref{lemma2}.  In the
first lemma, we prove that for any $M\in\ZedP$, we can find sufficiently small
$\epsilon$ so that if some $x_n<\epsilon$, then we know the exact value of
nearly all of the $(t+1)M$ terms preceeding $x_n$.  In the second lemma, we show
that if, in addition, certain of the $t$ sequences $\{A_{n}^{i}\}_{n=0}^{\infty}$
have period relatively prime to $t+1$, then some specific term further along in
the sequence must be at least as large as this $x_n$.  Finally, we show that
every such sequence must be bounded.


\begin{lemma}
\label{lemma1}
Given $M\in\Zed^+$, there exist $r>0$ and $\epsilon>0$ such that if there exists
$N\in\Nats$ with $x_{N}<\epsilon$, then we have
\begin{align}
x_{N-k(t+1)-i}=\frac{A^{t+1-i}_{N-k(t+1)-i-1}}{x_{N-(k+1)(t+1)}}\label{eq:first}\\
x_{N-(k+1)(t+1)}<\epsilon r^{k+1}\label{eq:second}
\end{align}
for all $i\in[t]$ and all $k$ with $0\le k\le M-1$.
\end{lemma}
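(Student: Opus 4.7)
I would prove the lemma by induction on $k$, establishing both (\ref{eq:first}) and (\ref{eq:second}) at each step. Let $\mu=\max_{i\in[t]}\sup_n A_n^i$ and $\nu=\min_{i\in[t]}\inf_n A_n^i$, both positive and finite since each $\An$ is positive and periodic. Set $r=\mu/\nu \ge 1$, and choose $\epsilon>0$ small enough that $\epsilon r^M<\min\{\nu/\sqrt{\mu},\sqrt{\nu}\}$; the two thresholds will surface naturally from the two comparisons the proof needs.

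The base case and inductive step look identical after writing $\tilde N = N-k(t+1)$ and noting that $x_{\tilde N}<\epsilon r^k$ (by hypothesis when $k=0$, by (\ref{eq:second}) at level $k-1$ otherwise). Applying the defining equation at $\tilde N$ yields $x_{\tilde N-j}\ge A^j_{\tilde N-1}/x_{\tilde N}\ge\nu/(\epsilon r^k)$ for every $j\in[t]$, so the $t$ terms $x_{\tilde N-1},\ldots,x_{\tilde N-t}$ are uniformly large. Handling the case $i=1$ is then straightforward: in the max defining $x_{\tilde N-1}$, every $j<t$ has $\tilde N-1-j \in \{\tilde N-t,\ldots,\tilde N-2\}$, so the lower bound just obtained forces the $j$-th contribution to be at most $\mu\epsilon r^k/\nu$, which our choice of $\epsilon$ makes strictly smaller than the lower bound $\nu/(\epsilon r^k)$ on $x_{\tilde N-1}$ itself. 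Hence the max is attained at $j=t$, giving the $i=1$ instance of (\ref{eq:first}) and $x_{\tilde N-(t+1)}\le \mu\epsilon r^k/\nu = \epsilon r^{k+1}$, which is exactly (\ref{eq:second}).

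The case $i\in\{2,\ldots,t\}$ is the main obstacle, because the competing indices $j>t+1-i$ in the max reach back past $x_{\tilde N-(t+1)}$, a region where no a priori bounds are in place. My plan is to use the smallness of $x_{\tilde N-(t+1)}$, just proved, as a second anchor. The candidates $j<t+1-i$ are eliminated exactly as in the $i=1$ step. If the max were instead attained at some $j_0>t+1-i$, then $x_{\tilde N-i-j_0}\le \mu\epsilon r^k/\nu = \epsilon r^{k+1}$; writing $s=i+j_0-(t+1)$, which lies in $\{1,\ldots,t-1\}$, identifies the index as $\tilde N-(t+1)-s$. Applying the same ``predecessors are large'' calculation to $x_{\tilde N-(t+1)}<\epsilon r^{k+1}$ then yields $x_{\tilde N-(t+1)-s}\ge\nu/(\epsilon r^{k+1})$. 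The two bounds together give $(\epsilon r^{k+1})^2\ge\nu$, contradicting $\epsilon r^M<\sqrt{\nu}$. So the max for $x_{\tilde N-i}$ is forced to $j=t+1-i$, completing the induction.
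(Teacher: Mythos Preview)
Your argument is correct and follows essentially the same route as the paper: induct on $k$, use the smallness of $x_{\tilde N}$ to force its $t$ predecessors to be large, settle the $i=1$ case to obtain \eqref{eq:second}, and then use $x_{\tilde N-(t+1)}$ as a second anchor to handle the competing indices that reach past position $\tilde N-(t+1)$. The only cosmetic differences are that you make the choice of $\epsilon$ explicit (the paper leaves it as ``$\epsilon$ sufficiently small'') and phrase the second range as a contradiction rather than a direct comparison, but the underlying two-anchor sandwich is identical.
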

\begin{proof}
Intuitively, we want to know the exact value of $x_i$ for many of the values of
$i$ in some large range, namely a range of $M(t+1)$ successive values.  Given 
$M$, we find a small value $\epsilon$ such that if $x_N<\epsilon$, then we know
the exact value of each $x_{N-j}$ when $j\in[M(t+1)]$ and
$(t+1)\!\!\not\!|\,j$.  
%
Roughly speaking, we show that if $j\in[M(t+1)]$ and $(t+1)|j$, then
$x_{N-j}$ must be very small; so small, in fact, that for each $i\in[t]$ the
maximum in the definition of $x_{N-j+i}$ is achieved by the term that divides by
$x_{N-j}$.  Throughout the proof, we need a number of inequalities that bound
$\epsilon$ from above.  We do not list these explicitly.  Rather, we
note only that we have a finite number of inequalities that bound $\epsilon$
from above, and the upper bounds are strictly positive.  Thus, we can choose
positive $\epsilon$ that satisfies them all.

By possibly relabeling, we assume $N=0$.  Let $\alpha=\max_{i,j}A^i_j$ and
$\beta=\min_{i,j}A^i_j$ and let $r=\alpha/\beta$.  We primarily want to prove
(\ref{eq:first}), which will satisfy the hypothesis of our next lemma. 
However, for the proof, we need to prove (\ref{eq:second}) as well.  Our proof
is by induction on $k$.  When invoking the induction hypothesis, we will only
assume (\ref{eq:second}).  Thus, our base case
$k=0$ is simply a special case of the general induction step, since by
hypothesis we have $x_0<\epsilon$.

To prove (\ref{eq:first}) it suffices to show that
\begin{align}
x_{-k(t+1)-j} &>
x_{-(k+1)(t+1)}\frac{A^{j-i}_{-k(t+1)-i-1}}{A^{t+1-i}_{-k(t+1)-i-1}}
\label{eq:third}
\end{align}
for all $i\in[t]$ and $i<j<i+(t+1)$, such that $j\ne t+1$.  This inequality
may look daunting, but it is simply saying that when computing $x_{-k(t+1)-i}$,
the term that divides by $x_{-(k+1)(t+1)}$ is larger than the term that divides
by $x_{-k(t+1)-j}$.  We first prove (\ref{eq:second}).  We consider for
$x_{-k(t+1)-1}$ which argument is maximum, and we show this is the argument
that divides by $x_{-(k+1)(t+1)}$.  If instead the maximum argument
divides by $x_{-k(t+1)-j}$, then we get
\begin{align*}
x_{-k(t+1)-1}&=\frac{A^{j-1}_{-k(t+1)-2}}{x_{-k(t+1)-j}}\\
&<
\frac{A^{j-1}_{-k(t+1)-2}}{A^j_{-k(t+1)-1}}\epsilon r^k\\
&<
\frac{A^1_{-k(t+1)-1}}{\epsilon r^k}\\
&<
x_{-k(t+1)-1},
\end{align*}
which is a contradiction. The first inequality 
holds because $x_{-k(t+1)}< \epsilon r^k$ and 
$A^j_{-k(t+1)-1}/x_{-k(t+1)-j}\le x_{-k(t+1)}$, 
the second holds because $\epsilon$ is sufficiently
small, and the third holds because $x_{-k(t+1)}<\epsilon r^k$.
This contradiction implies that
$x_{-k(t+1)-1}=\frac{A^t_{-k(t+1)-2}}{x_{-(k+1)(t+1)}}$.  Rewriting, we have
\begin{align*}
x_{-(k+1)(t+1)}&=\frac{A^t_{-k(t+1)-2}}{x_{k(t+1)-1}}\\
&<
\frac{A^t_{-k(t+1)-2}}{A^1_{-k(t+1)-1}}\epsilon r^k\\
&<
\epsilon r^{k+1}.
\end{align*}
As above, the first inequality holds because $x_{-k(t+1)}<\epsilon r^k$; the
second inequality comes from the definition of $r$.
Thus \eqref{eq:second} holds.

Now we prove \eqref{eq:first}.  To do so, we first prove \eqref{eq:third} for
$i<j<t+1$, and then prove \eqref{eq:third} for $t+1<j<i+(t+1)$.
By hypothesis, $x_{-k(t+1)}<\epsilon r^k$.  For each $j\in[t]$, we get
$\epsilon r^k > x_{-k(t+1)} \ge \frac{A^j_{-k(t+1)-1}}{x_{-k(t+1)-j}}$. 
Cross-multiplying gives
\begin{align*}
x_{-k(t+1)-j}&>\frac{A^j_{-k(t+1)-1}}{\epsilon r^k} \\
&>\epsilon r^{k+1}\frac{A^{j-i}_{-k(t+1)-i-1}}{A^{t+1-i}_{-k(t+1)-i-1}}\\
&>x_{-(k+1)(t+1)}\frac{A^{j-i}_{-k(t+1)-i-1}}{A^{t+1-i}_{-k(t+1)-i-1}},
\end{align*}
where the second inequality holds because $\epsilon$ is sufficiently small and
the third holds by (\ref{eq:second}), which we proved above.  So
we have proved (\ref{eq:third}) for $i<j<t+1$. Now we prove it for
$t+1<j<i+(t+1)$.  The argument is quite similar.

By (\ref{eq:second}) we have $x_{-(k+1)(t+1)}<\epsilon r^{k+1}$.  By
transitivity, for each $j$ with $t+1<j<i+(t+1)$, we get $\epsilon
r^{k+1}>x_{-(k+1)(t+1)}\ge \frac{A^j_{-(k+1)(t+1)-1}}{x_{-(k+1)(t+1)-j}}$. 
Rewriting this, we get
\begin{align*}
x_{-(k+1)(t+1)-j}&>\frac{A^j_{-(k+1)(t+1)-1}}{\epsilon r^{k+1}} \\
&>\epsilon r^{k+1}\frac{A^{j-i}_{-k(t+1)-i-1}}{A^{t+1-i}_{-k(t+1)-i-1}}\\
&>x_{-(k+1)(t+1)}\frac{A^{j-i}_{-k(t+1)-i-1}}{A^{t+1-i}_{-k(t+1)-i-1}}.
\end{align*}
As above, the second inequality holds because $\epsilon$ is sufficiently small
and the third holds by \eqref{eq:second}.  So we have proved
\eqref{eq:third} for all $i<j<i+(t+1)$.  Together with the case above, this
proves \eqref{eq:first}, and thus completes the proof.
\end{proof}

\begin{lemma}
\label{lemma2}
Let $\xn$ be a solution of (\ref{equation:eight}).  Suppose there exists $i$ with
$\gcd(t+1,p_ip_{t+1-i})=1$, and let $P=p_ip_{t+1-i}$.  If there exists $N\ge 0$
such that for all $j\in[t]$ and $k\in[P]$
\begin{align}
x_{N+k(t+1)-j}&=
\max\left\{
\frac{A^1_{N+k(t+1)-j-1}}{x_{N+k(t+1)-j-1}},
\frac{A^2_{N+k(t+1)-j-1}}{x_{N+k(t+1)-j-2}},
\ldots,
\frac{A^t_{N+k(t+1)-j-1}}{x_{N+k(t+1)-j-t}}
\right\} \notag\\
&=\frac{A^{t+1-j}_{N+k(t+1)-j-1}}{x_{N+k(t+1)-(t+1)}}, 
\label{equation:twelve}
\end{align}
then $x_{N+P(t+1)}\ge x_N$.
\end{lemma}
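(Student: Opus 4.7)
The plan is to analyze the ``anchor'' subsequence $y_k := x_{N+k(t+1)}$ for $k=0,1,\ldots,P$ and reduce the target inequality $y_P\ge y_0$ to a product inequality involving only the parameters $A^i_n$, which I would then settle by a symmetry-plus-coprimality argument.

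First I would plug the hypothesized identity $(\ref{equation:twelve})$ directly into the recurrence for $y_k$. Since the terms $x_{N+k(t+1)-i}$ (for $i\in[t]$) appearing in the defining maximum of $x_{N+k(t+1)}$ are exactly the ones governed by $(\ref{equation:twelve})$, substitution gives
$$
y_k \;=\; \max_{i\in[t]} \frac{A^i_{N+k(t+1)-1}}{x_{N+k(t+1)-i}} \;=\; y_{k-1}\cdot R_k, \qquad R_k := \max_{i\in[t]} \frac{A^i_{N+k(t+1)-1}}{A^{t+1-i}_{N+k(t+1)-i-1}}.
$$
Telescoping then reduces the lemma to showing $\prod_{k=1}^P R_k \ge 1$.

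Next I would exploit the involution $i\mapsto t+1-i$ on $[t]$. Let $i$ denote the index supplied by the hypothesis, so $\gcd(t+1,p_i p_{t+1-i})=1$, and note that $t+1-i\in[t]$ as well. Since $R_k$ is a maximum over $i'\in[t]$, choosing $i'=i$ and then $i'=t+1-i$ yields the two lower bounds $\prod_{k=1}^P R_k \ge \Lambda_1$ and $\prod_{k=1}^P R_k \ge \Lambda_2$, where
\begin{align*}
\Lambda_1 &:= \prod_{k=1}^{P} \frac{A^i_{N+k(t+1)-1}}{A^{t+1-i}_{N+k(t+1)-i-1}}, \\
\Lambda_2 &:= \prod_{k=1}^{P} \frac{A^{t+1-i}_{N+k(t+1)-1}}{A^{i}_{N+k(t+1)-(t+1-i)-1}}.
\end{align*}
The punch line I would push through is $\Lambda_1\Lambda_2=1$; this forces $\max(\Lambda_1,\Lambda_2)\ge 1$, and therefore $\prod_k R_k\ge 1$, which finishes the proof.

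To establish $\Lambda_1\Lambda_2=1$ I would use the coprimality hypothesis to average out each $A$-product. Setting $\Pi_j:=\prod_{n=0}^{p_j-1}A^j_n$, the fact that $\gcd(t+1,p_i)=1$ means that as $k$ ranges over $[P]=[p_i p_{t+1-i}]$, the indices $N+k(t+1)+c$ (for any fixed shift $c$) hit each residue class modulo $p_i$ exactly $p_{t+1-i}$ times. Hence every product of the form $\prod_{k=1}^P A^i_{N+k(t+1)+c}$ collapses to $\Pi_i^{\,p_{t+1-i}}$ independently of the shift $c$, and the analogous statement for $A^{t+1-i}$ gives $\Pi_{t+1-i}^{\,p_i}$. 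Substituting into $\Lambda_1$ and $\Lambda_2$ yields $\Lambda_1=\Pi_i^{\,p_{t+1-i}}/\Pi_{t+1-i}^{\,p_i}$ and $\Lambda_2=\Pi_{t+1-i}^{\,p_i}/\Pi_i^{\,p_{t+1-i}}$, so indeed $\Lambda_1\Lambda_2=1$. The main obstacle I anticipate is purely bookkeeping: ensuring that the various constant shifts appearing in the numerators and denominators of $\Lambda_1$ and $\Lambda_2$ really do wash out under the coprimality assumption, so that the two $\Lambda$'s end up as exact reciprocals rather than merely approximately so.
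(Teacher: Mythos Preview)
Your proposal is correct and follows essentially the same approach as the paper: substitute \eqref{equation:twelve} into the recurrence for $x_{N+k(t+1)}$ to get a multiplicative recursion $y_k=y_{k-1}R_k$, telescope, and then bound $\prod_k R_k$ from below by both $\Lambda_1$ and $\Lambda_2$ (the paper's inequality \eqref{equation:eighteen} and its ``analogous argument''), using the coprimality of $t+1$ with $p_i$ and $p_{t+1-i}$ to see that these two bounds are reciprocals. If anything, your treatment of the constant shifts in the subscripts of $\Lambda_1$ and $\Lambda_2$ is slightly more explicit than the paper's, which states \eqref{equation:sixteen} and \eqref{equation:seventeen} only for the shift $-1$ and then tacitly applies the same identity to the shift $-i-1$ in the denominator of \eqref{equation:eighteen}.
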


\begin{proof}
First note that for any choice of $P$, we can satisfy the second hypothesis by
(\ref{eq:first}) of Lemma~\ref{lemma1}.  Thus, to apply the present lemma, we
will only need to demonstrate that there
exists $i$ with $\gcd(t+1,p_ip_{t+1-i})=1$.
%
By definition 
\begin{align}
x_{N+k(t+1)}=\max_{1\le j\le t}
\left\{
\frac{A^j_{N+k(t+1)-1}}{x_{N+k(t+1)-j}}
\right\}.
\label{equation:thirteen}
\end{align}
By substituting (\ref{equation:twelve}) into (\ref{equation:thirteen}), we get
\begin{align}
x_{N+k(t+1)}=
x_{N+(k-1)(t+1)}
\max_{1\le j\le t}
\left\{
\frac{A^j_{N+k(t+1)-1}}{A^{t+1-j}_{N+k(t+1)-j-1}}
\right\}.
\label{equation:fourteen}
\end{align}

\noindent
By repeated application of recurrence (\ref{equation:fourteen}) for
all $k\in[P]$, we get \begin{align}
x_{N+P(t+1)}=
x_N\prod_{k=1}^P
\max_{1\le j\le t}
\left\{
\frac{A^j_{N+k(t+1)-1}}{A^{t+1-j}_{N+k(t+1)-j-1}}
\right\}.
\label{equation:fifteen}
\end{align}
Recall that $\gcd(t+1,p_ip_{t+1-i})=1$.  As a result, $t+1$ is an additive
generator of $\Zed/P\Zed$.  Applying this fact to subscripts, we get

\noindent
\begin{align}
  \prod_{k=1}^PA^i_{N+k(t+1)-1}&=\prod_{k=1}^PA^i_k,  \label{equation:sixteen}\\
  \prod_{k=1}^PA^{t+1-i}_{N+k(t+1)-1}&=\prod_{k=1}^PA^{t+1-i}_k.  \label{equation:seventeen}
\end{align}


\noindent
From (\ref{equation:fifteen}), we get that

\begin{align}
x_{N+P(t+1)}\ge
x_N
\prod_{k=1}^P
\frac{A^i_{N+k(t+1)-1}}{A^{t+1-i}_{N+k(t+1)-i-1}}
=x_N\frac{\prod_{k=1}^PA^i_k}{\prod_{k=1}^PA^{t+1-i}_k},
\label{equation:eighteen}
\end{align}
where the inequality follows from the definition of maximum, and the equality
follows from substituting (\ref{equation:sixteen}) and
(\ref{equation:seventeen}).  An analogous argument gives that
$x_{N+P(t+1)}\ge x_N\frac{\prod_{k=1}^PA^{t+1-i}_k}{\prod_{k=1}^PA^i_k}.$
Combining this inequality with (\ref{equation:eighteen}), we get that

\begin{align}
x_{N+P(t+1)}\ge
x_N\max\left\{
\frac{\prod_{k=1}^PA^i_k}{\prod_{k=1}^PA_k^{t+1-i}},
\frac{\prod_{k=1}^PA_k^{t+1-i}}{\prod_{k=1}^PA^i_k}
\right\}
\ge x_N(1) = x_N.
\end{align}
Here the second inequality holds because the arguments to $\max$
are reciprocals of each other (hence one of them is at least 1).
\end{proof}

\begin{lemma}
\label{lemma3}
Let $\xn$ be a solution of (\ref{equation:eight}).  If $\xn$ does not persist,
then there exists a subsequence $\{x_{n_k}\}^{\infty}_{k=1}$ of $\xn$ which
possesses the following properties:
\begin{enumerate}
\item[(i)] $x_{n_{k+1}}<x_{n_k}$ for all $k=1,2,\ldots.$
\item[(ii)] If $n_{k+1}>1+n_k$, then $x_{n_k}\le x_n$ and $x_{n_{k+1}}<x_n$ for each
$k=1,2,\ldots$ and for all $n_k<n<n_{k+1}$.
\item[(iii)] $\lim_{k\to\infty}x_{n_k}=0$.
\end{enumerate}
\end{lemma}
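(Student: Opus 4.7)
The plan is to build $\{n_k\}$ by a greedy construction, choosing at each step the earliest subsequent index whose value drops strictly below the current term. Specifically, I set $n_1=-t$ and, for each $k\ge 1$, define
\[ n_{k+1}=\min\{n>n_k:\ x_n<x_{n_k}\}. \]
The first thing to verify is that this minimum is attained at every stage. If it ever failed, then some $n_k$ would satisfy $x_n\ge x_{n_k}$ for all $n>n_k$; combining this bound with the finite initial block $x_{-t},\ldots,x_{n_k}$ would give a uniform positive lower bound on the whole sequence, contradicting the hypothesis that $\xn$ does not persist.

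Once the construction is shown to be well-defined, Property~(i) is immediate from the definition of $n_{k+1}$. For Property~(ii), suppose $n_{k+1}>1+n_k$ and pick any $n$ with $n_k<n<n_{k+1}$. The minimality of $n_{k+1}$ rules out $x_n<x_{n_k}$, so $x_n\ge x_{n_k}$; and then $x_{n_{k+1}}<x_{n_k}\le x_n$ gives the second inequality.

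For Property~(iii), the sequence $\{x_{n_k}\}$ is strictly decreasing by~(i) and bounded below by $0$, so it converges to some $L\ge 0$. I argue by contradiction: if $L>0$, then $x_{n_k}\ge L$ for every $k$, and the argument used for~(ii) shows that every index $n$ strictly between consecutive $n_k$ and $n_{k+1}$ satisfies $x_n\ge x_{n_k}\ge L$. Since $n_1=-t$ is the earliest index of $\xn$, we conclude $x_n\ge L$ for all $n\ge -t$, contradicting non-persistence. Hence $L=0$, which is precisely~(iii).

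The only real obstacle is recognizing why the greedy construction never stalls, which is exactly where the non-persistence hypothesis is consumed; the remaining verifications are essentially bookkeeping from the minimality built into the definition of $n_{k+1}$.
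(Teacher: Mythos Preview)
Your proof is correct and follows essentially the same greedy idea the paper sketches. One small point worth noting: the paper's sketch first extracts a subsequence tending to $0$ and then greedily thins \emph{that} subsequence, whereas you apply the greedy rule directly to the full sequence $\xn$. Your version is actually cleaner for Property~(ii): by choosing $n_{k+1}$ as the first index in all of $\xn$ where the value drops below $x_{n_k}$, minimality immediately forces $x_n\ge x_{n_k}$ for every intermediate $n$. If one greedily thins only a pre-chosen subsequence going to $0$, Property~(ii) does not follow without an extra argument, since intermediate indices of $\xn$ need not belong to that subsequence. So your route is the more direct implementation of the greedy strategy, and the paper's sketch should really be read as shorthand for exactly what you wrote.
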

\begin{proof}[Sketch]
This lemma was proved for the case $t=2$ in~\cite{KENTR}; however, that
proof also holds for general $t$.  For completeness, we sketch the proof here.
Since $\xn$ does not persist, it contains a subsequence
$\{x_{n_k}\}_{n=0}^{\infty}$ with $\lim_{k\to\infty}x_{n_k}=0$.  We greedily
take a strictly decreasing subsequence of $\{x_{n_k}\}_{n=0}^{\infty}$; it will
evidently satisfy all three desired properties.
\end{proof}

\begin{theorem}[Bounded Solutions]
Let $\xn$ be a solution of (\ref{equation:eight}).  If there exists $i\in[t]$
with $\gcd(t+1,p_ip_{t+1-i})=1$, then $\{x_n\}_{n=-t}^{\infty}$ is bounded and
persists.
\end{theorem}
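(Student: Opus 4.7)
The plan is to invoke the earlier equivalence between boundedness and persistence, so that it suffices to prove persistence. I would argue by contradiction: suppose $\{x_n\}$ does not persist. Then Lemma~\ref{lemma3} yields a strictly decreasing subsequence $\{x_{n_k}\}$ converging to $0$, such that every term $x_n$ lying strictly between $x_{n_k}$ and $x_{n_{k+1}}$ is bounded below by $x_{n_k}$.

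Set $P=p_ip_{t+1-i}$ for the index $i$ provided by the hypothesis, so that $\gcd(t+1,P)=1$. I would apply Lemma~\ref{lemma1} with $M=P$ to extract $\epsilon>0$ (and $r>0$), and then choose $k$ large enough so that both $x_{n_k}<\epsilon$ and $n_k-P(t+1)\ge n_1$. Taking $N=n_k$, equation (\ref{eq:first}) of Lemma~\ref{lemma1} pins down, for each $k'\in\{0,1,\ldots,P-1\}$ and each $j\in[t]$, exactly which argument achieves the maximum in the definition of $x_{n_k-k'(t+1)-j}$, namely the one involving $x_{n_k-(k'+1)(t+1)}$.

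Now set $N'=n_k-P(t+1)$. Reindexing by $k\mapsto P-k$ translates Lemma~\ref{lemma1}'s backward-looking conclusion into exactly the forward-looking hypothesis (\ref{equation:twelve}) of Lemma~\ref{lemma2} based at $N'$. Since also $\gcd(t+1,P)=1$, Lemma~\ref{lemma2} yields
\[
x_{n_k} \;=\; x_{N'+P(t+1)} \;\ge\; x_{N'} \;=\; x_{n_k-P(t+1)}.
\]

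To close the contradiction, locate the unique $\ell$ with $n_\ell \le n_k-P(t+1) < n_{\ell+1}$, which exists by our choice of $k$. Property (ii) of Lemma~\ref{lemma3} (together with the trivial equality case) gives $x_{n_k-P(t+1)}\ge x_{n_\ell}$, and property (i), combined with $n_\ell<n_k$, gives $x_{n_\ell}>x_{n_k}$. Chaining these against the display above produces the contradiction $x_{n_k}\ge x_{n_k-P(t+1)}\ge x_{n_\ell}>x_{n_k}$, hence $\{x_n\}$ must persist and therefore, by the earlier equivalence, be bounded. The main obstacle I anticipate is bookkeeping: cleanly aligning the reversed index range in Lemma~\ref{lemma1} with the forward range required by Lemma~\ref{lemma2}, and making sure that property (ii) of Lemma~\ref{lemma3} is correctly applied to the case where $n_k-P(t+1)$ does not happen to equal any term of the subsequence.
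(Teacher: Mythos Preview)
Your proposal is correct and follows essentially the same approach as the paper's own proof sketch: argue by contradiction via Lemma~\ref{lemma3}, feed a sufficiently small $x_{n_k}$ into Lemma~\ref{lemma1} with $M=P$, reindex to obtain the hypothesis of Lemma~\ref{lemma2}, and then derive a contradiction with the monotonicity properties of the subsequence. Your write-up is in fact more careful than the paper's sketch---you make explicit the reindexing $k\mapsto P-k$ that aligns Lemma~\ref{lemma1}'s backward conclusion with Lemma~\ref{lemma2}'s forward hypothesis, and you handle cleanly the case distinction on whether $n_k-P(t+1)$ lands on a subsequence term.
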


\begin{proof} [Sketch]
Assume that the hypothesis holds.  Suppose to the contrary that $\xn$ does not
persist.  By Lemma~\ref{lemma3}, we have a subsequence 
$\{x_{n_k}\}^{\infty}_{k=0}$ for
which the three properties of Lemma~\ref{lemma3} hold.  
Property (iii) states that $\lim_{k\to\infty}x_{n_k}=0$, so
we can apply Lemma~\ref{lemma1}.  The conclusion of Lemma~\ref{lemma1}
satisfies the hypothesis of Lemma~\ref{lemma2}.  Now by Lemma~\ref{lemma2},
there exists $n_k$ such that $x_{n_k}\ge x_{n_k-P(t+1)}$ (where $P=p_ip_{t+1-i}$ as
in Lemma~\ref{lemma2}).
This contradicts the properties of Lemma~\ref{lemma3} as follows.  If
$x_{n_k-P(t+1)}$ is an element of the subsequence
$\{x_{n_k}\}^{\infty}_{k=0}$, then it contradicts Property (i).  Otherwise, 
Property (ii) implies that there exist an integer $s$ such that
$n_s<n_k-P(t+1)$ and $x_{n_s}\le x_{n_k-P(t+1)}$.  But now $x_{n_s}\le
x_{n_k-P(t+1)}\le x_{n_k}$, which again contradicts Property (i).
\end{proof}


\section{Sufficient Conditions for Every Solution to Be Unbounded}
\label{section:Sufficient-Conditions-for-Unboundedness}
In this section, we present the second of our two main results.
We initially show that if the sequences $\An$, $i=1,\ldots,t$, of positive real
numbers, which are not necessarily periodic, satisfy Hypothesis (H), then every
positive solution of (\ref{equation:eight}) is unbounded (and does not
persist).  We then show that Hypothesis (H) is satisfied when $\An$ are
periodic, and certain of them have period a multiple of $t+1$.

\begin{theorem}
\label{unbounded solutions}
If $\An$,
$i=1,2,\ldots,t$, is a set of sequences of positive real numbers
satisfying Hypothesis (H), then every positive solution of
\eqref{equation:eight} 
is unbounded.
\end{theorem}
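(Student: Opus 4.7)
The plan is to exhibit, for any positive solution $\xn$, a specific arithmetic progression of indices along which $x_n$ decays to zero at a geometric rate. Once this is in hand, $\xn$ fails to persist, and the earlier lemma stating that a positive solution of~(\ref{equation:eight}) is bounded if and only if it persists immediately delivers unboundedness.

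Let $j\in[t+1]$ be the integer guaranteed by Hypothesis~(H), and fix the residue class $r\in[t+1]$ with $r\equiv j+1\pmod{t+1}$. The first observation I would make is purely arithmetic: whenever $n\equiv r\pmod{t+1}$, one has $n-1\equiv j\pmod{t+1}$, and for each $i\in[t]$, $n-i-1\equiv (t+1-i)+j\pmod{t+1}$. These are exactly the residue classes in which Hypothesis~(H) supplies control, so as soon as $n$ is large enough that both subscripts lie in the ranges over which the sup and inf are taken, we obtain
\[
\frac{A^i_{n-1}}{A^{t+1-i}_{n-i-1}} \;\le\; \frac{S_{A^i}}{I_{A^{t+1-i}}} \;=:\; \eta_i \;<\; 1
\]
for every $i\in[t]$.

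The second step turns this into a contraction on $\xn$. Applying the recurrence to $x_{n-i}$ and singling out its $(t+1-i)$-th argument gives $x_{n-i}\ge A^{t+1-i}_{n-i-1}/x_{n-(t+1)}$, so each term in the $\max$ defining $x_n$ obeys $A^i_{n-1}/x_{n-i}\le \eta_i\,x_{n-(t+1)}$. Letting $\eta=\max_{i\in[t]}\eta_i$ and taking the maximum over $i$ in the recurrence for $x_n$, I obtain the key contraction $x_n\le \eta\,x_{n-(t+1)}$ with $\eta<1$. Iterating along the progression $\{r+K(t+1):K\ge 0\}$ forces $x_{r+K(t+1)}\to 0$ as $K\to\infty$, so $\xn$ does not persist, and the aforementioned lemma then yields the desired unboundedness.

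The principal obstacle is really just the residue-class bookkeeping: verifying that the two successive recurrence subscripts $n-1$ and $n-i-1$ land in precisely the residue classes $j$ and $(t+1-i)+j$ modulo $t+1$ that Hypothesis~(H) controls, uniformly across all $i\in[t]$, and checking the threshold on $n$ beyond which both subscripts are large enough for the sup/inf bounds to apply. Once that alignment is in place, the inequality $x_n\le \eta\,x_{n-(t+1)}$ is a one-line manipulation and the conclusion follows from a single geometric iteration.
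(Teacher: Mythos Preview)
Your argument is correct and follows essentially the same route as the paper: both establish the contraction $x_n\le\eta\,x_{n-(t+1)}$ with $\eta=\max_i S_{A^i}/I_{A^{t+1-i}}<1$ along the residue class $j+1\pmod{t+1}$ by bounding $x_{n-i}$ from below via the $(t{+}1{-}i)$-th term of its defining maximum, and then iterate to force a subsequence to zero. The only cosmetic difference is the final step: the paper exhibits divergence to $\infty$ on the remaining residue classes directly, whereas you invoke the bounded\,$\Leftrightarrow$\,persistent lemma---both are valid and equally short.
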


\begin{proof}
Let $\xn$ be a positive solution of (\ref{equation:eight}) and let
$j\in\{0,1,\ldots,t\}$.
If the constants $S_{A^i}$, $I_{A^i}$ as defined in Hypothesis (H)
are postive for all ${i=1,2,\ldots,t}$, then for all $n\ge 0$, we have the following:

\begin{align*}
x_{(t+1)n+(t+2)+j}&=\max_{1\le i\le
t}\left\{\frac{A^i_{(t+1)n+(t+1)+j}}{x_{(t+1)n+(t+2)-i+j}}\right\} \\
&
=\max_{1\le i\le t}
\left\{\frac{A^i_{(t+1)n+(t+1)+j}}
{
\max_{1\le k\le t}
\left\{\frac{A^k_{(t+1)n+(t+1)-i+j}}{x_{(t+1)n+(t+2)-k-i+j}}
\right\}} 
\right\} \\
&
=\max_{1\le i\le t}
\left\{
{
\min_{1\le k\le t}
\left\{
\frac
{A^i_{(t+1)n+(t+1)+j} x_{(t+1)n+(t+2)-k-i+j}}
{A^k_{(t+1)n+(t+1)-i+j}}
\right\}} 
\right\} \\
&
\le\max_{1\le i\le t}
{
\left\{
\frac
{A^i_{(t+1)n+(t+1)+j} 
}
{A^{t+1-i}_{(t+1)n+(t+1-i)+j}}
\right\}}
x_{(t+1)n+1+j}
\\
&\le
\max_{1\le i\le t}\left\{\frac{S_{A^i}}{I_{A^{t+1-i}}}\right\}x_{(t+1)n+1+j}.
\end{align*}
%
The first inequality comes from letting $k=t+1-i$ in the min, and the second
comes from the definitions of $S_{A^i}$ and $I_{A^{t+1-i}}$.
Now let 
$
\alpha = 
\max_{1\le i\le t}\left\{\frac{S_{A^i}}{I_{A^{t+1-i}}}\right\}.
$
Since the sequences $\An$ satisfy Hypothesis (H), we get $\alpha<1$.  We
have just shown that $x_{(t+1)n+(t+2)+j}\le \alpha x_{(t+1)n+1+j}$ for all
$n\ge 0$, so $x_{(t+1)n+1+j}\le \alpha^n x_{1+j}$.
It follows that $\lim_{n\to \infty}x_{(t+1)n+1+j}=0$.
Furthermore, $\lim_{n\to\infty}x_{(t+1)n+k+j}=\infty$ for all $2\le
k\le t+1$.  Below we give the proof for $k=2$; the other proofs are analogous.
\begin{align*}
x_{(t+1)n+2+j}=&\max_{1\le i\le t+1}\left\{
\frac{A^i_{(t+1)n+1+j}}
{x_{(t+1)n+(2-i)+j}}\right\}\\
\ge &
\frac{A^1_{(t+1)n+1+j}}{x_{(t+1)n+1+j}}\\
\ge & \frac{I_{A^1}}{x_{(t+1)n+1+j}}.
\end{align*}
\aftermath
\end{proof}

\begin{remark}
Observe that the solution in Theorem~\ref{unbounded solutions} is extended periodic.
\end{remark}

\begin{corollary}[Periodic Coefficients]
Let $\An$ be a periodic sequence of positive real numbers with period
$p_i\in\ZedP$ for all $i\in[t]$.
Let $d=t/2$ if $t$ is even and let $d=(t-1)/2$ if $t$ is odd.
If $t$ is odd, then assume also that for $i=(t+1)/2$ we have 
$k_i\in\ZedP$ such that $p_i=(t+1)k_i$ and for all $l_i\in[k_i]$:
\begin{align*} 
A^i_{(t+1)l_i+(t+1)+j}<A^i_{(t+1)l_i+i+j}.
\end{align*}

\noindent
Every positive solution of \eqref{equation:eight}
is unbounded if either of the following holds:
\begin{enumerate}
\item
For each 
$i\in[d]$, 
we have (a) $p_i=(t+1)k_i$, where
$k_i\in\ZedP$, 
and (b) for some $j\in[t+1]$ and for all $l_i\in[k_i]$
we have:
\begin{align*}
A^i_{(t+1)l_i+(t+1)+j}
&<\min\{A^{t+1-i}_1,\ldots,A^{t+1-i}_{p_{t+1-i}}\}\\
&<\max\{A^{t+1-i}_1,\ldots,A^{t+1-i}_{p_{t+1-i}}\}\\
&<A^i_{(t+1)l_i+i+j}.
\end{align*}
\item
For each $i\in[d]$, we have (a) $p_{t+1-i}=(t+1)k_{t+1-i}$, where
$k_{t+1-i}\in\ZedP$, and (b) for some $j\in[t+1]$ and for
all $l_{t+1-i}\in[k_{t+1-i}]$
we have:
\begin{align*}
A^{t+1-i}_{(t+1)l_{t+1-i}+(t+1)+j}
&<\min\{A^i_1,\ldots,A^i_{p_i}\}\\
&<\max\{A^i_1,\ldots,A^i_{p_i}\}\\
&<A^{t+1-i}_{(t+1)l_{t+1-i}+(t+1-i)+j}.
\end{align*}
\end{enumerate}
\end{corollary}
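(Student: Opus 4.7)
The plan is to derive the corollary from Theorem~\ref{unbounded solutions} by verifying that its assumptions imply Hypothesis (H). The two cases of the corollary are symmetric under the swap $A^i\leftrightarrow A^{t+1-i}$, so I will treat only case 1 in detail; case 2 then follows by the analogous argument. Fix the common $j\in[t+1]$ supplied by part (b). To establish Hypothesis (H), I need $S_{A^i}<I_{A^{t+1-i}}$ for every $i\in[t]$, and I organize this into three families of indices: the ``lower'' indices $i\in[d]$, their partners $t+1-i$, and, when $t$ is odd, the self-paired middle index $i=(t+1)/2$.

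For $i\in[d]$, since $p_i=(t+1)k_i$, the progression $\{(t+1)n+(t+1)+j:n\ge 0\}$ cycles through exactly the $k_i$ residues $(t+1)l_i+(t+1)+j\pmod{p_i}$ as $l_i$ ranges over $[k_i]$. Therefore $S_{A^i}=\max_{l_i\in[k_i]}A^i_{(t+1)l_i+(t+1)+j}$, which by part (b) is strictly less than $\min_m A^{t+1-i}_m$. The latter is a lower bound for every term of the sequence $A^{t+1-i}$, so it also bounds $I_{A^{t+1-i}}$ from below, and transitivity yields $S_{A^i}<I_{A^{t+1-i}}$.

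For the partner index $i=t+1-i'$ with $i'\in[d]$, one has $S_{A^{t+1-i'}}\le\max_m A^{t+1-i'}_m$, while the other half of (b) asserts $A^{i'}_{(t+1)l_{i'}+i'+j}>\max_m A^{t+1-i'}_m$ for every $l_{i'}\in[k_{i'}]$. Since $p_{i'}=(t+1)k_{i'}$, the progression $\{(t+1)n+i'+j:n\ge 0\}$ hits precisely those $k_{i'}$ residues modulo $p_{i'}$, hence $I_{A^{i'}}=\min_{l_{i'}\in[k_{i'}]}A^{i'}_{(t+1)l_{i'}+i'+j}>\max_m A^{t+1-i'}_m\ge S_{A^{t+1-i'}}$, which is the required inequality for the index $i=t+1-i'$. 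Finally, when $t$ is odd, the middle index satisfies $t+1-i=i$, so Hypothesis (H) collapses to $S_{A^{(t+1)/2}}<I_{A^{(t+1)/2}}$; the supplementary hypothesis stated just above the numbered cases supplies exactly this, once its inequality is interpreted to compare the supremum on the progression $\{(t+1)l_i+(t+1)+j\}$ with the infimum on the progression $\{(t+1)l_i+i+j\}$.

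With Hypothesis (H) verified, Theorem~\ref{unbounded solutions} immediately delivers the conclusion. The main obstacle is purely bookkeeping: matching up the arithmetic progressions appearing in Hypothesis (H) with those on which part (b) controls the sequences, and ensuring that a single $j\in[t+1]$ serves simultaneously for every $i\in[t]$, including both the ``lower'' indices and their partners; no new estimates beyond transitivity and the periodicity identities above are needed.
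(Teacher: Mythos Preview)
Your approach is exactly the paper's: the published proof is the single line ``$\An$, for all $i\in\{1,2,\ldots,t\}$, satisfies Hypothesis~(H),'' and you have simply unpacked that claim in detail, correctly exploiting the periodicity $p_i=(t+1)k_i$ to identify the sup and inf in Hypothesis~(H) with the finite maxima and minima controlled by part~(b). One caveat: for the self-paired middle index $i=(t+1)/2$ when $t$ is odd, the supplementary hypothesis as literally written is a \emph{pairwise} inequality $A^i_{(t+1)l_i+(t+1)+j}<A^i_{(t+1)l_i+i+j}$ for each $l_i$, which does not by itself force $\max_{l_i}A^i_{(t+1)l_i+(t+1)+j}<\min_{l_i}A^i_{(t+1)l_i+i+j}$; your phrase ``once its inequality is interpreted to compare the supremum \ldots\ with the infimum'' quietly assumes the intended (stronger) reading, which is what Hypothesis~(H) actually requires and what the paper evidently intends.
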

\begin{proof}
$\An$, for all $i\in\{1,2,\ldots,t\}$, satisfies Hypothesis (H).
\end{proof}


\section{Future Goals}
\label{section:Future-Goals}

\noindent We conclude with an open problem and with suggestions for two potential biological applications that could be considered novel for max-type difference equations.
%
%
First, our open problem.

\begin{openproblem}

\noindent Consider the difference equation

\[ x_{n}=\max\left\{\frac{A_{n-1}^{k_{1}}}{x_{n-k_{1}}}, \frac{A_{n-1}^{k_{2}}}{x_{n-k_{2}}}, \ldots, 
\frac{A_{n-1}^{k_{t}}}{x_{n-k_{t}}}\right\}, \ \ n=1, 2, \ldots, \]

\noindent where $t\in\{2, 3, \ldots\}$, $k_{1}, k_{2}, \ldots, k_{t}\in\{1, 2, \ldots\}$,
$\{A_{n}^{k_{i}}\}_{n=0}^{\infty}$ $(i=1, 2, \ldots, t)$ is a periodic sequence of \emph{nonnegative} numbers with period 
$p_{k_{i}}\in\{1, 2, \ldots\}$, and initial conditions are positive.  Find necessary and sufficient conditions on the periods and periodic parameters such that every solution is bounded.  

\end{openproblem}

\vspace{.1in}

\noindent Second, max-type difference equations in essence belong to a larger
group of difference equations, called piecewise-defined difference
equations 
\cite{AGKL, BBCK, F, GL, KOCIC}.  
Nice features of these difference equations which make them
especially suited to serve as models of biological processes and systems
include their ``decision-making" properties with the incorporation of
thresholds and their sometimes eventually periodic or unbounded behavior. 
Max-type difference equations such as (\ref{equation:eight}) have the
additional attractive feature of allowing an arbitrary number of variable
parameters.  
\vspace{.1in}

\noindent Piecewise-defined difference equations have been used as models for
\emph{neural networks}~\cite{C1, C2}, as well as differential
equations with maxima (cf.~\cite{BH}), the counterparts to max-type difference
equations.
%
%
Less frequently, they have been applied to 
\emph{morphogenesis}~\cite{DP, SH}, which investigates 
the origins of growth and shape from the embryo to the full adult.  The study of morphogenesis includes analyzing the occurrence of repetitive patterns of development, for example, zebra stripes.  There is also abnormal morphogenesis, which is seen in the development of cancer in which there is excessive, almost unbounded, growth of tissues.

\vspace{.1in}

\noindent We propose that in the future one might consider max-type equations such as (\ref{equation:eight}) or modifications of (\ref{equation:eight}) as candidates for the modeling of neural networks and/or morphogenesis.





\end{document}